\documentclass[12pt, a4paper, reqno]{amsart}
\usepackage{amsmath}
\usepackage{amsfonts}
\usepackage{amssymb}
\usepackage{amsthm}
\usepackage{url}

\newcommand{\R}{\mathbb{R}}

\DeclareMathOperator*{\osc}{osc}

\def\vint_#1{\mathchoice%
          {\mathop{\kern 0.2em\vrule width 0.6em height 0.69678ex depth -0.58065ex
                  \kern -0.8em \intop}\nolimits_{\kern -0.4em#1}}%
          {\mathop{\kern 0.1em\vrule width 0.5em height 0.69678ex depth -0.60387ex
                  \kern -0.6em \intop}\nolimits_{#1}}%
          {\mathop{\kern 0.1em\vrule width 0.5em height 0.69678ex depth -0.60387ex
                  \kern -0.6em \intop}\nolimits_{#1}}%
          {\mathop{\kern 0.1em\vrule width 0.5em height 0.69678ex depth -0.60387ex
                  \kern -0.6em \intop}\nolimits_{#1}}}


%
\newcommand{\art}[6]{{\sc #1, \rm #2, \it #3 \bf #4 \rm (#5), \mbox{#6}.}}
\newcommand{\book}[3]{{\sc #1, \it #2, \rm #3.}}
\newcommand{\AND}{{\rm and }}
\newcommand{\p}{{$p\mspace{1mu}$}}

\newcommand{\loc}{_{\rm loc}}

\newcommand{\eps}{\varepsilon}

\theoremstyle{plain}
\newtheorem{theorem}[equation]{Theorem}
\newtheorem{lemma}[equation]{Lemma}
\newtheorem{corollary}[equation]{Corollary}

\numberwithin{equation}{section}

\theoremstyle{definition}

\theoremstyle{remark}
\newtheorem{remark}[equation]{Remark}
\pagestyle{plain}

\title{On a frequency function approach to the unique
  continuation principle}

\author{Seppo Granlund}
\address[S.G.]{University of Helsinki,
  Department of Mathematics and Statistics,
  P.O. Box 68, FI-00014 University of Helsinki, Finland}
\email{seppo.granlund@pp.inet.fi}

\author{Niko Marola}
\address[N.M.]{University of Helsinki,
  Department of Mathematics and Statistics,
  P.O. Box 68, FI-00014 University of Helsinki, Finland}
\email{niko.marola@helsinki.fi}

\date{}

\begin{document}

\keywords{}

\subjclass[2000]{Primary: 35J92; Secondary: 35B60, 35J70.}

\begin{abstract}
  In this survey we discuss the frequency function method so as to
  study the problem of unique continuation for elliptic partial
  differential equations. The methods used in the note were mainly
  introduced by Garofalo and Lin.
\end{abstract}

\maketitle

\section{Introduction}

Let $G$ be an open connected subset of $\R^n$, $n\geq 2$. We consider
the problem of unique continuation for both the solutions to the
Laplace equation and to equation
\begin{equation} \label{eq:Pde}
\Delta u = b(x)\cdot\nabla u, 
\end{equation}
where the drift coefficients, $\{b_i(x)\}_{i=1}^n$, are
continuous and bounded in $G$. The classical unique continuation
principle for the latter equation can be formulated as follows
\begin{itemize}

\item[(i)] Let $u_1$ and $u_2$ be two solutions to \eqref{eq:Pde} such
  that $u_1=u_2$ in an open subset of $G$. Then $u_1\equiv u_2$ in
  $G$.

\item[(ii)] Let $u$ be a solution to \eqref{eq:Pde} such that $u=0$
  in an open subset of $G$. Then $u\equiv 0$ in $G$.

\end{itemize}
The latter formulation is equivalent to the following: (ii') Let $u$
be a solution to \eqref{eq:Pde} and consider two open concentric
balls $B_r\subset \overline{B}_R\subset G$ such that $u=0$ on $B_r$,
then $u\equiv 0$ in $B_R$.

Instead of using Carleman's method to deal with the unique
continuaton, we follow the method introduced by Garofalo and Lin in
\cite{GarLinIndiana} and \cite{GarLinCPAM}, see also Fabes et
al.~\cite{FaGaLin}. Their method is based on the ingenious analysis of
(a modification of) Almgren's frequency function, see \cite{Almgren},
which, in turn, leads to monotonicity formulas and doubling
inequalities. The main result in \cite{GarLinCPAM} is the unique
continuation principle for the solutions to the equation
\begin{equation} \label{eq:GarLin}
-\nabla\cdot(A(x)\nabla u) + b(x)\cdot\nabla u + V(x)u=0,
\end{equation}
where $A(x) = (a_{ij}(x))_{i,j=1}^n$ is a real symmetric matrix-valued
function satisfying the uniform ellipticity condition and it is
Lipschitz continuous. The lower order terms, the drift coefficient
$b(x)$ and the potential $V(x)$, are even allowed to have
singularities.  The reader should consult (1.4)--(1.6) in
\cite{GarLinCPAM} for the exact structure conditions of $b$ and $V$.

In the present survey, our goal is to provide a clear user's
guide-type presentation on this topic, and we do not attempt to deal
with the most general case \eqref{eq:GarLin}. For such a treatise, the
reader should consult more advanced papers \cite{GarLinIndiana} and
\cite{GarLinCPAM}, and a paper \cite{TaoZhang} by Tao and Zhang.

Our proofs are by contradiction, which makes it possible to use
Poincar\'e's inequality in certain phases of the proof. By this
observation we are able to obtain more straightforward treatment for
the classical proof, however our method is indirect.

In outline, a brief discussion on the Rellich--Necas identity, as well
as the notation, can be found in \textsection~\ref{sect:Prel}. The
unique continuation principle for the Laplace equation is covered in
\textsection~\ref{sect:Laplace}, and for the solutions to
\eqref{eq:Pde} in \textsection~\ref{sect:Pde}. We close this note by
discussing possible generalizations to the nonlinear case in
\textsection~\ref{sect:Gener}, i.e., unique continuation principle for
the \p-Laplace equation, $$\nabla\cdot(|\nabla u|^{p-2}\nabla u)= 0,$$
where $1<p<\infty$. Observe that in the case $p=2$ we recover the
Laplace equation. We do not claim that the frequency function method
is a panacea for the unique continuation principle in this nonlinear
case, but it seems to open new possibilities to study the problem.

We want to remark that the unique continuation for the solutions to
\eqref{eq:Pde} is interestingly entwined with the one for the
\p-Laplace equation (see \textsection~\ref{sect:Gener}). Lastly, in
contrast to the Laplace equation, equation \eqref{eq:Pde} is more
subtle and to reach the unique continuation principle for its
solutions a great deal more analysis is required.

\section{Preliminaries}
\label{sect:Prel}

Throughout the present note, $G$ is open and connected subset of
$\R^n$, $n\geq 2$. We use the notation $B_r = B(x,r)$ for concentric
open balls of radii $r$ centered at $x\in G$. Unless otherwise stated,
the letter $C$ denotes various positive and finite constants whose
exact values are unimportant and may vary from line to line. Moreover,
$dx=dx_1\ldots dx_n$ denotes the Lebesgue volume element in $\R^n$,
whereas $dS$ denotes the surface element. We denote by $|E|$ the
n-dimensional Lebesgue measure of a measurable set
$E\subseteq\R^n$. Along $\partial G$, whenever $G$ is smooth enough,
is defined the outward pointing unit normal vector field at
$x\in \partial G$ and is denoted by
$\nu(x)=(\nu_1,\ldots,\nu_n)(x)$. We will also write $u_\nu = \nabla
u\cdot \nu$ or $\partial u/\partial \nu$ for the directional
derivative of $u$. We denote a tangential gradient by $\nabla_t$.

\medskip

We shall make use of the following Rellich--Necas type identity. To
the best of our knowledge, this formula was first employed by Payne
and Weinberger in \cite{PaWe}, and it is a variant of a formula due to
Rellich~\cite{Rellich} and Necas. We also refer to
Jerison--Kenig~\cite{JeKe}. A Rellich--Necas type formula appears,
e.g., in Pucci--Serrin~\cite{PuSe}, Garofalo--Lewis~\cite{GaLe}, and
Lewis--Vogel~\cite{LeVo}.

\begin{lemma} \label{Rellic-Necas} Let $u\in C^2(G)\cap C^1(\overline
  G)$. The following formula is valid
  \begin{align} \label{eq:RN} -\int_{G} & \left(2(x\cdot\nabla
      u)\Delta
      u+(n-2)u\Delta u\right)\, dx \nonumber \\
    & = \int_{\partial G}\left(|\nabla u|^2(x\cdot\nu)-2(x\cdot\nabla
      u)u_\nu - (n-2)uu_\nu\right)\, dS.
\end{align}
In particular, if $u$ is harmonic in $G$ then \eqref{eq:RN} reduces to
the following formula
  \begin{equation} \label{eq:RN-harm} \int_{\partial G}\left(|\nabla
      u|^2(x\cdot\nu)-2(x\cdot\nabla u)u_\nu - (n-2)uu_\nu\right)\, dS
    = 0
\end{equation}
\end{lemma}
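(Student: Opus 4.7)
The plan is to prove \eqref{eq:RN} as a divergence-theorem identity for a carefully chosen vector field on $G$, and then observe that \eqref{eq:RN-harm} is the special case $\Delta u\equiv 0$.

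Concretely, I would introduce the vector field
\[
W \;=\; |\nabla u|^{2}\,x \;-\; 2\,(x\cdot\nabla u)\,\nabla u \;-\; (n-2)\,u\,\nabla u
\]
on $G$, which is $C^{1}$ by the assumption $u\in C^{2}(G)\cap C^{1}(\overline G)$, and compute $\dive W$ piece by piece. For the first piece, $\dive(|\nabla u|^{2}x)=n|\nabla u|^{2}+x\cdot\nabla(|\nabla u|^{2})=n|\nabla u|^{2}+2\sum_{j,k}x_{k}u_{j}u_{jk}$. For the second piece, using $\partial_{i}(x\cdot\nabla u)=u_{i}+\sum_{k}x_{k}u_{ki}$, one gets $\dive\bigl((x\cdot\nabla u)\nabla u\bigr)=|\nabla u|^{2}+\sum_{i,k}x_{k}u_{i}u_{ki}+(x\cdot\nabla u)\Delta u$. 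For the third piece, $\dive(u\nabla u)=|\nabla u|^{2}+u\,\Delta u$. Collecting, the symmetric cross-terms $\sum_{j,k}x_{k}u_{j}u_{jk}$ cancel, the $|\nabla u|^{2}$ terms telescope as $n-2-(n-2)=0$, and one is left with
\[
\dive W \;=\; -2\,(x\cdot\nabla u)\,\Delta u \;-\; (n-2)\,u\,\Delta u.
\]

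Next, I would apply the divergence theorem to $W$ on $G$ (which presumes that $G$ is sufficiently regular, as is implicit here), noting that on $\partial G$
\[
W\cdot\nu \;=\; |\nabla u|^{2}(x\cdot\nu)\;-\;2(x\cdot\nabla u)\,u_{\nu}\;-\;(n-2)u\,u_{\nu},
\]
since $\nabla u\cdot\nu=u_{\nu}$. Equating $\int_{G}\dive W\,dx$ with $\int_{\partial G}W\cdot\nu\,dS$ yields \eqref{eq:RN} directly. The harmonic specialization \eqref{eq:RN-harm} then follows by inspection, because the left-hand side of \eqref{eq:RN} vanishes identically when $\Delta u=0$.

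The only real bookkeeping obstacle is verifying the cancellation of the mixed second-derivative sums $\sum_{j,k}x_{k}u_{j}u_{jk}$ arising from $\dive(|\nabla u|^{2}x)$ and $\dive((x\cdot\nabla u)\nabla u)$; the coefficient $2$ in front of $(x\cdot\nabla u)\nabla u$ in the definition of $W$ is chosen precisely to produce this cancellation, and the coefficient $(n-2)$ in front of $u\nabla u$ is chosen to kill the residual $|\nabla u|^{2}$ contribution. No deeper machinery is needed beyond the product rule and the divergence theorem.
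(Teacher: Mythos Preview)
Your proposal is correct and matches the paper's proof essentially verbatim: the paper introduces exactly the same vector field $W=|\nabla u|^{2}x-2(x\cdot\nabla u)\nabla u-(n-2)u\nabla u$, records the divergence identity $\dive W=-2(x\cdot\nabla u)\Delta u-(n-2)u\Delta u$, applies the Gauss theorem, and then sets $\Delta u=0$ to obtain \eqref{eq:RN-harm}. Your added detail on the cancellation of the mixed terms $\sum_{j,k}x_{k}u_{j}u_{jk}$ is a helpful expansion of what the paper leaves as ``by a direct calculation.''
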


\begin{proof}
  The proof follows from the following divergence identity which stems
  from Noether's theorem; observe (\cite[eq. 3.2]{PaWe}, and also
  \cite[p. 204]{JeKe}), by a direct calculation, that
\begin{align} \label{eq:RNdiv}
  \nabla\cdot & \left(|\nabla u|^2x-2(x\cdot\nabla
    u)\nabla u -(n-2)u\nabla u \right) \nonumber \\
  & = -2(x\cdot\nabla u)\Delta u-(n-2)u\Delta u.
\end{align}
Then integrating over $G$ and applying the Gauss theorem we arrive at
\eqref{eq:RN}. Equation \eqref{eq:RN-harm} follows from \eqref{eq:RN}
simply by setting $\Delta u=0$.
\end{proof}

\begin{remark}
  We remark that using the fact that $$|\nabla u|^2=|\nabla_t
  u|^2+|u_\nu|^2$$ and denoting $\alpha(x)=x-(x\cdot\nu)\nu$ we may rewrite
  \eqref{eq:RN-harm} as follows
\begin{align*}
\int_{\partial G} & \left(\left(|\nabla_t
    u|^2-|u_\nu|^2\right)(x\cdot\nu) +2(\alpha(x)\cdot\nabla u)u_\nu\right. \\
& \qquad  - (n-2)uu_\nu\big)\, dS = 0,
\end{align*}
which is just equation (2) in Jerison--Kenig~\cite{JeKe}.
\end{remark}

For harmonic functions and for each $B_r\subset G$, $x\in \partial
B_r$, $\nu$ is the outward pointing unit normal at $x$, we may extract
from \eqref{eq:RN-harm}, or from \eqref{eq:RNdiv}, the following
\begin{align} \label{eq:RNball} r\int_{\partial B_r}|\nabla u|^2\, dS
  & = 2r\int_{\partial B_r}|u_\nu|^2\, dS + (n-2)\int_{\partial
    B_r}uu_\nu\, dS \nonumber \\
  & = 2r\int_{\partial B_r}|u_\nu|^2\, dS + (n-2)\int_{B_r}|\nabla
  u|^2\, dx.
\end{align}

Equivalently, \eqref{eq:RNball} may be stated as a Hardt--Lin
\cite[Lemma 4.1]{HardtLin} type monotonicity identity
\[
\frac{d}{dr}\left(r^{2-n}\int_{B_r}|\nabla u|^2\, dx\right) =
2r^{2-n}\int_{\partial B_r}|u_\nu|^2\, dS.
\]

\medskip

We shall also need the following Poincar\'e type inequality, consult
Giusti~\cite{Giusti} for the proof.  Suppose $u\in W^{1,2}(B_r)$ and
let $Z=\{x\in B_r: u(x)=0\}$. If there exists a constant $0<\gamma<1$
such that $|Z| \geq \gamma |B_r|$, then there exists a constant $C_p$,
depending on $n$ and $\gamma$, such that
\begin{equation} \label{eq:Poincare}
\int_{B_r}u^2\, dx \leq C_pr^2\int_{B_r}|\nabla u|^2\, dx.
\end{equation}

\section{Unique continuation: Laplace equation}
\label{sect:Laplace}

Almgren's \cite{Almgren} insight was that for a harmonic function $u$
the function
\begin{equation} \label{FF1}
F(r) = \frac{r\int_{B_r}|\nabla u|^2\, dx}{\int_{\partial B_r}u^2\, dS},
\end{equation}
called the \emph{frequency function}, is monotonically non-decreasing
as a function of $r$. He observed, moreover, that by employing this
property one is able to deduce the unique continuation principle for
the solutions to the Laplace equation. See \cite{Almgren} for more
properties of the frequency function.

In what follows, we denote the numerator by $rD(r)$ and the
denominator by $I(r)$. The following is, of course, well-known but we
treat it here since the proof is rather short and simple. Let us
demonstrate how the result is reached.

\begin{theorem} \label{thm:UC1}
  Suppose $u\in C^2(G)$ and $\Delta u=0$ in $G$. If there is an open
  set $D\subset G$ such that $u=0$ in $D$, then $u\equiv 0$ in $G$.
\end{theorem}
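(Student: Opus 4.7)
The argument would proceed by contradiction using Almgren's frequency function $F(r) = rD(r)/I(r)$. Suppose $u \not\equiv 0$ in $G$, and let $V$ be the union of all open subsets of $G$ on which $u$ vanishes; $V$ is nonempty since it contains $D$, and it is open, but $V \ne G$ by assumption. Since $G$ is connected and $V$ is a proper open subset, $\partial V \cap G \ne \emptyset$, so one can pick $x_0 \in V$ with $\rho_0 := \dist(x_0, \partial V)$ satisfying $\rho_0 < \dist(x_0, \partial G)$, and fix a radius $R$ with $\rho_0 < R < \dist(x_0, \partial G)$. All integrals below are centered at $x_0$. The goal is to prove $u \equiv 0$ on $B_R(x_0)$, which will contradict the maximality of $\rho_0$.

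The first main step is to establish that $F$ is non-decreasing on $(\rho_0, R]$, on which interval $I(r) > 0$ (since $I(r) = 0$ for harmonic $u$ on $B_r$ would force $u \equiv 0$ on $B_r$ by the maximum principle, hence $r \leq \rho_0$). Differentiating $I$ in polar coordinates gives $I'(r) = \tfrac{n-1}{r} I(r) + 2\int_{\partial B_r} u u_\nu \, dS$, and the divergence theorem combined with $\Delta u = 0$ yields $\int_{\partial B_r} u u_\nu \, dS = D(r)$. Likewise $D'(r) = \int_{\partial B_r} |\nabla u|^2 \, dS$, which by \eqref{eq:RNball} equals $\tfrac{n-2}{r} D(r) + 2\int_{\partial B_r} u_\nu^2 \, dS$. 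Substituting into $(\log F)'(r) = 1/r + D'/D - I'/I$ and simplifying, one obtains
\[
(\log F)'(r) = \frac{2}{D(r)I(r)}\left[I(r) \int_{\partial B_r} u_\nu^2 \, dS - \left(\int_{\partial B_r} u u_\nu \, dS\right)^{\!2}\right] \geq 0
\]
by the Cauchy--Schwarz inequality on $\partial B_r$. Hence $F(r) \leq F(R) =: N < \infty$ throughout $(\rho_0, R]$.

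The second step exploits this bound in two complementary ways. On the one hand, inserting $D(r) \leq N\,I(r)/r$ into the expression for $I'$ yields $I'(r)/I(r) \leq (n-1+2N)/r$, which integrates to a polynomial growth estimate for $I$ (and hence for $\int_{B_r} u^2 \, dx = \int_{\rho_0}^r I(s)\,ds$) on $(\rho_0, R]$. On the other hand, because $u$ vanishes on $B_{\rho_0}(x_0)$, the zero set of $u$ fills at least the fraction $\gamma = (\rho_0/R)^n$ of $B_R$, so the Poincar\'e inequality \eqref{eq:Poincare} applies and gives $\int_{B_R} u^2 \, dx \leq C_p R^2 D(R) \leq C_p N R\,I(R)$ after a further application of the frequency bound.

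Combining these two estimates across scales $r \in (\rho_0, R]$, the aim is to deduce $I(R) = 0$, hence $u \equiv 0$ on $\partial B_R(x_0)$, and then $u \equiv 0$ on $B_R(x_0)$ by the maximum principle, contradicting the maximality of $\rho_0$. The main obstacle I anticipate is precisely this final quantitative matching: both the Poincar\'e upper bound and the doubling-type lower bound for $\int_{B_R} u^2$ degenerate as $R \downarrow \rho_0$, so one must either track asymptotics sharply or, more robustly, iterate the doubling across a chain of shrinking radii --- exploiting that $C_p(\gamma) \to 0$ as $\gamma \to 1$ --- to convert the family of estimates into a strict inconsistency.
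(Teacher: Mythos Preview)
Your setup and the monotonicity of $F$ are correct and match the paper's argument essentially line for line: the computation of $I'$, the Gauss--Green step $\int_{\partial B_r} u u_\nu\,dS = D(r)$, the use of \eqref{eq:RNball} for $D'$, and the Cauchy--Schwarz conclusion $(\log F)'\ge 0$ are exactly what the paper does.

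The gap is in your ``second step.'' You already have the crucial inequality
\[
\frac{I'(r)}{I(r)} \;\le\; \frac{n-1+2N}{r}\qquad (\rho_0<r\le R),
\]
but you then detour through the Poincar\'e inequality and leave the argument unfinished. This detour is not needed in the harmonic case (the paper invokes Poincar\'e only in Section~4 for the drift equation), and as you yourself note, the matching of upper and lower bounds you propose does not close. What the paper does instead is immediate: integrate the displayed inequality from $s$ to $t$ with $\rho_0<s<t\le R$ to obtain the Harnack-type bound
\[
I(t)\;\le\;\left(\frac{t}{s}\right)^{\,n-1+2N} I(s),
\]
and then let $s\to\rho_0^+$. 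Since $u\equiv 0$ on $\overline{B}_{\rho_0}$ and $u$ is continuous, $I(s)\to I(\rho_0)=0$, whence $I(t)=0$ for every $t\in(\rho_0,R]$. This contradicts $I(r)>0$ on $(\rho_0,R]$, and you are done. So the missing idea is simply to send $s\downarrow\rho_0$ in the integrated estimate you already possess; no Poincar\'e, no iteration, no tracking of $C_p(\gamma)$ is required.
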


\begin{proof}
  We prove the following from which the claim follows easily: Assume
  $0<r_1<r_2$ and $B_{r_1}\subset\overline B_{r_2}\subset G$. If
  $u(x)=0$ in $B_{r_1}$, then $u(x)=0$ in $B_{r_2}$. To prove this, we
  assume, on the contrary, that there exists $x_0\in G$ so that
  $u(x)=0$ in $B_{r_1}(x_0)$ but $u$ is not identically zero in
  $B_{r_2}(x_0)$. It will be shown below that function $I(r)$ is
  non-decreasing. Then $I(r_2)>0$ and there is a number
  $r_0\in[r_1,r_2]$ such that $I(r_0)=0$, but $I(s)>0$ for $s>r_0$. We
  thus consider an interval $[s,r_2]$, where $s>r_0$.

  Let us start by proving a Harnack type inequality for $I(r)$. Since
\begin{equation} \label{eq:I'(r)}
I'(r) = \frac{n-1}{r}I(r) +  2\int_{\partial B_r}uu_\nu\, dS,
\end{equation}
it follows from the following Gauss--Green identity,
\[
\int_{B_r}|\nabla u|^2\, dx + \int_{B_r}u\Delta u\, dx =
\int_{\partial B_r}uu_\nu\, dS,
\]
that
\[
\int_{\partial B_r}uu_\nu\, dS = \int_{B_r}|\nabla u|^2\, dx \geq 0.
\]
Hence $r\mapsto I(r)$ is non-decreasing. We will consider the function
$H(r) = \log I(r)$, which is also non-decreasing. The derivative of
$H(r)$ is
\begin{equation} \label{eq:H'(r)}
H'(r) = \frac{n-1}{r} + \frac{2F(r)}{r}.
\end{equation}
We use \eqref{eq:H'(r)} to obtain an upper bound for the oscillation
of $H(r)$ on $[s,t]\subset [s,r_2]$ as follows
\begin{align} \label{eq:oscH}
  \osc_{r\in [s,t]}H(r) & =\max_{r\in [s,t]} H(r) -\min_{r\in [s,t]} H(r) \nonumber \\
  & = H(t)-H(s) = \int_s^tH'(r)\, dr  \nonumber \\
  & =  \int_s^t\left(\frac{n-1}{r} + \frac{2F(r)}{r}\right)\, dr \nonumber \\
  & \leq
  \left(n-1+2\sup_{r\in[s,t]}F(r)\right)\log\left(\frac{t}{s}\right).
\end{align}
From \eqref{eq:oscH} it follows that
\[
\frac{\max_{r\in [s,t]} I(r)}{\min_{r\in [s,t]} I(r)} \leq \left(\frac{t}{s}\right)^{n-1+2\sup_{r\in[s,t]}F(r)},
\]
which implies the following Harnack type inequality
\begin{equation} \label{eq:Harnack}
\max_{r\in [s,t]} I(r) \leq \left(\frac{t}{s}\right)^{n-1+2\sup_{r\in[s,t]}F(r)}\min_{r\in [s,t]} I(r).
\end{equation}
The next step is to show that Almgren's frequency function is
non-decreasing.  The derivative of $F(r)$ is 
\begin{equation} \label{eq:F'(r)}
F'(r) = \frac{D(r)I(r)+rD'(r)I(r)-rD(r)I'(r)}{I^2(r)},
\end{equation}
where $D'(r) = \int_{\partial B_r}|\nabla u|^2\, dS$. From the
Rellich--Necas type identity \eqref{eq:RNball} we obtain
\begin{align} \label{eq:rD'I}
  r & D'(r)I(r) = \left(\int_{\partial B_r}u^2\, dS\right)\left(r\int_{\partial B_r}|\nabla u|^2\, dS\right) \nonumber \\
  & = \left(\int_{\partial B_r}u^2\, dS\right)\left(2r\int_{\partial B_r}|u_\nu|^2\, dS + (n-2)D(r)\right) \nonumber \\
  & = 2r \left(\int_{\partial B_r}u^2\,
    dS\right)\left(\int_{\partial B_r}|u_\nu|^2\, dS\right) + (n-2)D(r)I(r). 
\end{align}
Plugging \eqref{eq:rD'I} and \eqref{eq:I'(r)} into \eqref{eq:F'(r)} we
arrive at 
\begin{align*}
  I^2(r)F'(r) & = 2r \left(\int_{\partial B_r}u^2\,
    dS\right)\left(\int_{\partial B_r}|u_\nu|^2\, dS\right)-2r\left(\int_{\partial B_r}uu_\nu\, dS\right)^2 \\
  & \geq 2r \left(\int_{\partial B_r}uu_\nu\,
    dS\right)^2-2r\left(\int_{\partial B_r}uu_\nu\, dS\right)^2 = 0,
\end{align*}
where we used H\"older's inequality. It follows that $F(r)$ is
non-decreasing, and hence we may control the exponent in
\eqref{eq:Harnack} from above.

To finish the proof, from \eqref{eq:Harnack} we obtain
\[
I(t) = \max_{r\in [s,t]} I(r) \leq \left(\frac{t}{s}\right)^{n-1+2F(t)}I(s).
\]
Since $I(s)\to 0$ as $s\to r_0$, it follows that $I(t)=0$. This is a
contradiction.
\end{proof}

We state the following immediate corollary (of the preceding proof) as
it migth be of independent interest to the reader.

\begin{corollary}
  Let $u\in C^2(G)$ and $\Delta u=0$ in $G$. Suppose that 
\[
I(r)= \int_{\partial B_r}u^2\, dS > 0
\]
at every $r\in (s,t)$, $B_s\subset \overline{B}_t\subset G$. Then the
following Harnack type inequality is valid
\begin{equation} \label{eq:HarnackforI}
\max_{r\in (s,t)}\vint_{\partial B_r}u^2\, dS \leq
\left(\frac{t}{s}\right)^{2F(t)}\min_{r\in (s,t)}\vint_{\partial
  B_r}u^2\, dS.
\end{equation}
\end{corollary}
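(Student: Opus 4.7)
The plan is to rerun the oscillation argument from the proof of Theorem~\ref{thm:UC1}, now applied to the logarithm of the spherical mean
\[
J(r):=\vint_{\partial B_r}u^2\, dS = \frac{I(r)}{\sigma_{n-1}\,r^{n-1}},
\]
where $\sigma_{n-1}$ denotes the surface area of the unit sphere in $\R^n$. Setting $\widetilde H(r):=\log J(r)$, the identity \eqref{eq:H'(r)} immediately yields
\[
\widetilde H'(r)=H'(r)-\frac{n-1}{r}=\frac{2F(r)}{r}.
\]
The dimensional term $(n-1)/r$ that appears in $H'$ is precisely the contribution coming from the factor $r^{n-1}$ in the surface measure, and it cancels after normalizing $I(r)$ to a mean value. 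This is the crucial observation that allows the exponent in \eqref{eq:HarnackforI} to be $2F(t)$ rather than $n-1+2F(t)$.

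With this identity in hand, the hypothesis $I(r)>0$ on $(s,t)$ guarantees that $\widetilde H$ is differentiable on the entire open interval, while the monotonicity of the frequency function established in the proof of Theorem~\ref{thm:UC1} gives $F(r)\le F(t)$ for every $r\in(s,t)$. Note that $F(t)$ is indeed finite: since $I$ is non-decreasing and strictly positive on $(s,t)$, continuity forces $I(t)>0$. Integrating $\widetilde H'$ over any $[s',t']\subset(s,t)$ produces
\[
\widetilde H(t')-\widetilde H(s')=\int_{s'}^{t'}\frac{2F(r)}{r}\, dr\le 2F(t)\log\frac{t'}{s'}.
\]
Since $\widetilde H'\ge 0$, the function $J$ is itself non-decreasing and continuous on $(s,t)$; exponentiating and letting $s'\to s^+$, $t'\to t^-$ then yields the asserted bound \eqref{eq:HarnackforI}.

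Apart from verifying that $F(t)$ is finite—handled by the brief observation above—the entire argument is a direct transcription of the one already carried out for Theorem~\ref{thm:UC1}, with the single modification that it is performed on the logarithm of the normalized quantity $J(r)$ rather than on $H(r)=\log I(r)$. I do not anticipate any serious obstacle beyond this bookkeeping.
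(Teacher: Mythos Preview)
Your argument is correct and is essentially the paper's own: the paper notes that one only needs to observe that $r^{1-n}I(r)$ is non-decreasing and then invoke \eqref{eq:Harnack}, which is exactly your computation $\widetilde H'(r)=2F(r)/r\ge 0$ together with the oscillation bound, carried out directly for $\log J$ instead of quoting \eqref{eq:Harnack} and dividing out the $(t/s)^{n-1}$ factor. The only cosmetic difference is that you redo the integration for the normalized quantity rather than citing the already-proved inequality for $I$.
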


Note that for \eqref{eq:HarnackforI} one needs to observe that
$r^{1-n}I(r)$ is non-decreasing, and then the inequality follows
immediately from \eqref{eq:Harnack}.

We may also estimate how rapidly a harmonic function grows near a
point where it vanishes. Namely, it is well-known but noteworthy that
from the fact that $F(r)$ is non-decreasing it directly follows from
\eqref{eq:HarnackforI} that for $0<r<R$
\[
\int_{\partial B_r}u^2\, dS \geq \gamma r^{\beta+n-1},
\]
where $\gamma := I(R)R^{-\beta-n+1}$ and $\beta := 2F(R)$.

\section{Unique continuation: $\Delta u = b (x)\cdot \nabla u$}
\label{sect:Pde}

We shall deal with the following modified version of Almgren's
frequency function
\begin{equation} \label{FF2}
F(r) = \frac{r\int_{\partial B_r}uu_\nu\, dx}{\int_{\partial B_r}u^2\,
  dS},
\end{equation}
and denote the numerator by $rH(r)$ and the denominator by $I(r)$. Of
course, for harmonic functions \eqref{FF2} is equal to \eqref{FF1}
thanks to the Gauss--Green identity. It is important to note that the
frequency function defined in \eqref{FF2} is not necessarily
non-negative for all radii $r>0$.

One may easily check that the frequency function defined in
\eqref{FF2}, as well as in \eqref{FF1}, is invariant under scaling in
the following sense: Let $\tau \in \R$, $\tau>0$, and denote
$v(x)=u(\tau x)$, where $u$ is a solution to \eqref{eq:Pde}. Then
\[
F^v(r)=F^{u}(\tau r)
\]
for each $r>0$, where $F^v(r)$ denotes the frequency function
associated with function $v$.

The theorem we prove is the following. The proof is an extension of
the harmonic case presented in the preceding section yet more subtle
and demanding.

\begin{theorem} \label{thm:UC2} Suppose $u\in C^2(G)$ is a solution
  to $$\Delta u = b(x)\cdot \nabla u$$ in $G$, where the drift
  coefficients $\{b_i(x)\}_{i=1}^n$ are continuous and bounded in
  $G$. If there is an open set $D\subset G$ such that $u=0$ in $D$,
  then $u\equiv 0$ in $G$.
\end{theorem}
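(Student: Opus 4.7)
The plan is to mimic the harmonic argument from Theorem \ref{thm:UC1}, with the drift term $b\cdot\nabla u$ treated as a perturbation controlled by the Poincar\'e inequality \eqref{eq:Poincare}. Arguing by contradiction, assume there exist concentric balls $B_{r_1}\subset\overline B_{r_2}\subset G$ with $u\equiv 0$ on $B_{r_1}(x_0)$ but $u\not\equiv 0$ on $B_{r_2}(x_0)$. After verifying that $I(r)$ is essentially non-decreasing on the relevant range (see below), pick $r_0\in [r_1,r_2]$ with $I(r_0)=0$ and $I(s)>0$ for $s\in(r_0,r_2]$. By continuity, $u\equiv 0$ on $\overline{B_{r_0}}$, so for every $r\in(r_0,r_2]$ the zero set of $u$ inside $B_r$ has measure at least $|B_{r_0}|\geq\gamma|B_{r_2}|$ for some fixed $\gamma\in(0,1)$. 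Thus Poincar\'e's inequality \eqref{eq:Poincare} applies uniformly on this range.

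Next I would rewrite the numerator of $F(r)$ using Gauss--Green:
\begin{equation*}
H(r)=\int_{\partial B_r}uu_\nu\,dS
   =\int_{B_r}|\nabla u|^2\,dx+\int_{B_r}u\,(b\cdot\nabla u)\,dx
   =:D(r)+E(r).
\end{equation*}
Since $|b|\leq M$ and, by Poincar\'e, $\|u\|_{L^2(B_r)}\leq C_p r\|\nabla u\|_{L^2(B_r)}$, the H\"older inequality yields $|E(r)|\leq CMr\,D(r)$; consequently $H(r)=(1+O(r))D(r)$ on $(r_0,r_2]$ for $r_2$ chosen small enough. In particular $H(r)\geq\tfrac12 D(r)\geq 0$ there, whence $I'(r)=\tfrac{n-1}{r}I(r)+2H(r)\geq 0$, justifying the a priori monotonicity of $I$ that was used to define $r_0$.

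The next step is to compute $F'(r)$ exactly as in \eqref{eq:F'(r)}, but now with $D'(r)=\int_{\partial B_r}|\nabla u|^2\,dS$ controlled through the Rellich--Necas identity \eqref{eq:RN} applied to $\Delta u=b\cdot\nabla u$. The bulk term on the left of \eqref{eq:RN} becomes
\begin{equation*}
J(r):=\int_{B_r}\bigl(2(x\cdot\nabla u)(b\cdot\nabla u)+(n-2)u\,(b\cdot\nabla u)\bigr)\,dx,
\end{equation*}
for which Poincar\'e and $|b|\leq M$ give $|J(r)|\leq CrD(r)$. Carrying this through, plus estimating the corresponding boundary error $rE'(r)=r\int_{\partial B_r}u(b\cdot\nabla u)\,dS$ by Cauchy--Schwarz in terms of $I(r)$ and $D'(r)$, the nonnegative Cauchy--Schwarz remainder that made $F$ monotone in the harmonic case survives, and the estimate collapses to a differential inequality of the form
\begin{equation*}
F'(r)\geq -C\bigl(1+F(r)\bigr),\qquad r\in (r_0,r_2],
\end{equation*}
with $C$ depending only on $n$, $M$ and $\gamma$. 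Integrating this yields a uniform upper bound $F(r)\leq C^{*}$ on $(r_0,r_2]$.

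Finally, exactly the oscillation estimate \eqref{eq:oscH}--\eqref{eq:Harnack} from the harmonic case goes through for $H(r)=\log I(r)$, because \eqref{eq:I'(r)} still holds and $F$ is now bounded. This gives a Harnack-type inequality
\begin{equation*}
I(t)\leq (t/s)^{n-1+2C^{*}}I(s) \qquad \text{for } r_0<s<t\leq r_2.
\end{equation*}
Letting $s\to r_0^{+}$ forces $I(t)=0$ on $(r_0,r_2]$, contradicting the choice of $r_0$. The main obstacle is the derivation of the differential inequality for $F$: one must carefully reassemble the Rellich--Necas contribution, the Gauss--Green correction $E(r)$, and the boundary error $E'(r)$, and show that all the terms coming from the drift can be absorbed as $-C(1+F(r))$ via Poincar\'e; the positivity of $I$ on $(r_0,r_2]$ (which is what allows division by $I(r)$) and the smallness of $r-r_0$ are the only devices available for this absorption.
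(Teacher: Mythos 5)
Your overall architecture is the same as the paper's (contradiction, Poincar\'e to show $H(r)\ge \tfrac12\int_{B_r}|\nabla u|^2\,dx\ge 0$ and hence $I$ non-decreasing, a differential inequality $F'\ge -C(1+F)$ for the modified frequency, then the Harnack estimate for $I$ and the limit $s\to r_0$), but the step you yourself flag as ``the main obstacle'' is precisely where the proof lives, and your sketch of it does not work as stated. The problem is the boundary drift term $rI(r)E'(r)$ with $E'(r)=\int_{\partial B_r}u\,(b\cdot\nabla u)\,dS$. Cauchy--Schwarz only gives $|rI(r)E'(r)|\le M r\,I(r)^{3/2}D'(r)^{1/2}$ with $D'(r)=\int_{\partial B_r}|\nabla u|^2\,dS$, and this quantity cannot in general be absorbed into $-C\bigl(H(r)I(r)+rI(r)^2\bigr)$: if you spend the whole term $rI(r)D'(r)$ on the Rellich--Necas identity (to produce $2rI\int_{\partial B_r}|u_\nu|^2\,dS\ge 2rH^2$, the ``nonnegative Cauchy--Schwarz remainder'' you invoke), there is nothing left to absorb an $\eps\,rI(r)D'(r)$ coming from Cauchy-with-$\eps$ applied to the drift term; if instead you keep a fraction $\delta\, rID'$ for that absorption, the Rellich--Necas part only yields $2(1-\delta)rH^2+\tfrac{\delta}{2}rH^2$ against the $-2rH^2$ in $I^2F'$, leaving a deficit of order $\delta\,rH^2=\delta F(r)H(r)I(r)$, i.e.\ a Riccati-type term $-CF^2/r$ in the differential inequality, which does not integrate to a uniform bound on $F$. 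Neither ``positivity of $I$'' nor ``smallness of $r-r_0$'' helps here; smallness of $r$ only controls the bulk terms.

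The paper's resolution is the dichotomy (A) $I(r)\int_{\partial B_r}|\nabla u|^2\,dS\le 4H^2(r)$ versus (B) $I(r)\int_{\partial B_r}|\nabla u|^2\,dS> 4H^2(r)$ (after Garofalo--Lin): in case (A) the drift boundary term is bounded directly by $2MH(r)$ times $rI(r)$ and the full Rellich--Necas identity is used, giving $F'\ge-\alpha F/r$; in case (B) Rellich--Necas is not used at all, half of the positive term $rID'$ absorbs the drift boundary term via Cauchy-with-$\eps$, and hypothesis (B) makes the other half dominate $2rH^2$, at the price of the additive term $-2M^2rI^2$, which is exactly why the final inequality has the form $F'\ge-\tfrac{\alpha}{r}(F+\beta)$ rather than $F'\ge-\tfrac{\alpha}{r}F$. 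Some device of this kind is needed, and it is absent from your proposal. A secondary, more minor omission: you need the configuration ``$u\equiv0$ on $B_{r_1}(x_0)$, $u\not\equiv0$ on $B_{r_2}(x_0)$'' with $r_2$ arbitrarily small (so that $\sqrt{C_p}Mr_2<1/2$) and with $r_1/r_2$ bounded below; this is what the paper's chaining argument along a curve provides, and simply asserting the existence of one such pair of balls and then ``choosing $r_2$ small enough'' does not deliver it.
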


As opposed to Almgren's frequency function, \eqref{FF1}, frequency
function for solutions to \eqref{eq:Pde} as defined in \eqref{FF2} is
not known to be non-decreasing in $r$. To overcome this,
the key idea is to obtain the following inequality
\begin{equation} \label{eq:F'ineq}
F'(r) \geq -\frac{\alpha}{r}(F(r) + \beta),
\end{equation}
where $0<\alpha,\beta<\infty$ are not depending on $r$. Inequality
\eqref{eq:F'ineq} is obtained only for small values of $r$. Then
setting $T(r) := F(r)+\beta$, and thus $T'(r) = F'(r)$, we may rewrite
\eqref{eq:F'ineq} as follows
\begin{equation} \label{eq:logT}
\frac{d}{dr}\log T(r) \geq -\frac{\alpha}{r}.
\end{equation}
From \eqref{eq:logT} one may deduce the following for each pair
$r<\rho$
%
%
\[
T(r)\leq \left(\frac{\rho}{r}\right)^{\alpha}T(\rho),
\]
i.e., 
\begin{equation} \label{eq:Fineq} F(r)\leq
  \left(\frac{\rho}{r}\right)^{\alpha}F(\rho)+\beta\left(\left(\frac{\rho}{r}\right)^{\alpha}-1\right).
\end{equation}

The detailed proof below is rather technical, but straightforward.

\begin{proof}[Proof of Theorem~\ref{thm:UC2}]
  As in the proof of Theorem~\ref{thm:UC1}, the proof is by
  contradiction. Suppose, on the contrary, that there is an open set
  $D\subset G$ such that $u=0$ in $D$, but $u$ is not identically zero
  in $G$. Then it is possible to pick arbitrary small neighborhoods
  $B_{r_1}(x_0)$ and $B_{r_2}(x_0)$,
  $\overline{B}_{r_1}(x_0),\,\overline{B}_{r_2}(x_0)\subset G$, such
  that $u(x)=0$ in $B_{r_1}(x_0)$ but $u$ is not identically zero in
  $B_{r_2}(x_0)$. This can be shown by connecting a point $x_1\in D$
  to a point $x_2\in G\setminus D$ such that $u(x_2)\neq 0$, by a rectifiable
  curve in $G$, taking a finite sub-cover of balls with arbitrary
  small radii, and by employing a well-known chaining
  argument. Observe further that radii $r_1$ and $r_2$, which are to
  be fixed later, can be chosen in such a way that there exists
  $0<\gamma_0<1$ so that
  \[
  \frac{|B_{r_1}(x_0)|}{|B_{r_2}(x_0)|}\geq \gamma_0.
  \]
  This enables us to employ Poincar\'e's inequality. 

  In order to show that $I(r)$ is non-decreasing for small values of
  $r$, we start by showing that there exists $r_2>0$ such that
  $H(r)\geq 0$ for each $0<r\leq r_2$. By the Poincar\'e inequality,
  \eqref{eq:Poincare}, we get
\begin{align*}
  \left|\int_{B_r}u\Delta u\, dx\right| & \leq \left(\int_{B_r}u^2\, dx\right)^{1/2}\left(\int_{B_r}\left|b(x)\cdot\nabla u\right|^2\, dx\right)^{1/2} \\
  & \leq \sqrt{C_p}M\left(r^2\int_{B_r}|\nabla u|^2\, dx\right)^{1/2}\left(\int_{B_r}|\nabla u|^2\, dx\right)^{1/2} \\
  & = \sqrt{C_p}Mr\int_{B_r}|\nabla u|^2\, dx,
\end{align*}
where $M:=\|b\|_{L^\infty(G)}<\infty$ and $C_p$ is the constant in the
Poincar\'e inequality and here it depends on $\gamma_0$. We now select
$r_2$ small enough so that $\sqrt{C_p}Mr < 1/2$ for every $r\leq
r_2$. Plugging the preceding estimate into the Gauss--Green formula we
arrive at
\begin{equation} \label{eq:DH}
\int_{B_r}|\nabla u|^2\, dx \leq  2\int_{\partial B_r}uu_\nu\, dS = 2H(r),
\end{equation}
and hence $H(r)\geq 0$ for every $0<r\leq r_2$. In addition, we easily see
that $I(r)$ is non-decreasing on $(0,r_2)$ as
\begin{equation} \label{eq:I'} I'(r) = \frac{n-1}{r}I(r) +
  2\int_{\partial B_r}uu_\nu\, ds = \frac{n-1}{r}I(r) +
  2H(r).
\end{equation}
Since we know that $I(r_2)>0$, there exists a radius $\tilde r
\in[r_1,r_2]$ such that $I(\tilde r)=0$, but $I(r)>0$ for $r>\tilde
r$. From here on out, we thus consider an interval $(\tilde r, r_2]$.

Let us examine the derivative of $F(r)$. We have
\begin{equation} \label{eq:F'}
F'(r) = \frac{H(r)I(r)+rH'(r)I(r)-rH(r)I'(r)}{I^2(r)}.
\end{equation}
On the other hand, we obtain again from the Gauss-Green formula that
\begin{equation} \label{eq:H'}
H'(r) = \int_{\partial B_r}|\nabla u|^2\, dS + \int_{\partial B_r}u\Delta u\, dS.
\end{equation}
Plugging \eqref{eq:H'} into \eqref{eq:F'} and using \eqref{eq:I'} we
have the following expression for the derivative of the frequency
function
\begin{align} \label{eq:IF'} I^2(r)F'(r) & = H(r)I(r)+rI(r)\int_{\partial
    B_r}|\nabla u|^2\, dS + rI(r)\int_{\partial B_r}u\Delta u\, dS
  \nonumber \\
& \qquad - (n-1)H(r)I(r) - 2rH^2(r).
\end{align}
At this point, we distinguish the following two possibilities. This is
one of the crucial points in the proof of this theorem, and is in many
ways analogous to Case 1 and 2, i.e., (2.49) and (2.51) in Garofalo
and Lin~\cite{GarLinCPAM}. As it will become clear, out of the two
cases (B) is much stronger.

\begin{enumerate}

\item[] 

\item[(A)] $I(r)\int_{\partial B_r}|\nabla u|^2\, dx\leq 4H^2(r)$;

\item[]

\item[(B)] $I(r)\int_{\partial B_r}|\nabla u|^2\, dx > 4H^2(r)$.

\item[]

\end{enumerate}
Clearly either (A) or (B) holds true. 

Suppose first that (A) is valid. We continue by estimating the terms
on the right in \eqref{eq:IF'}. The third term can be estimated as
follows using equation \eqref{eq:Pde} and hypothesis (A)
\begin{align} \label{eq:DeltauA}
  \left|\int_{\partial B_r}u\Delta u\, dS\right| & \leq \left|\int_{\partial B_r}u(b(x)\cdot\nabla u)\, dS\right| \leq M\int_{\partial B_r}|u||\nabla u|\, dS \nonumber \\
  & \leq M\left(\int_{\partial B_r}u^2\,
    dS\right)^{1/2}\left(\int_{\partial B_r}|\nabla u|^2\,
    dS\right)^{1/2} \nonumber \\
  & \leq 2MH(r).
\end{align}
We handle the second term on the right in \eqref{eq:IF'} using
Rellich--Necas type equation \eqref{eq:RN}. We have
\begin{align} \label{eq:IF'2term}
  rI(r) & \int_{\partial B_r}|\nabla u|^2\, dS = 2rI(r)\int_{\partial B_r}|u_\nu|^2\, dS + (n-2)I(r)\int_{\partial B_r}uu_\nu\, dS \nonumber \\
  & - 2I(r)\int_{B_r}(x\cdot\nabla u)\Delta u\, dx -
  (n-2)I(r)\int_{B_r}u\Delta u\, dx.
\end{align}
We note first that by using H\"older's inequality the first term on
the right in \eqref{eq:IF'2term} can be estimated as follows 
\begin{equation} \label{eq:RN1}
I(r)\int_{\partial B_r}|u_\nu|^2\, dS \geq \left(\int_{\partial
    B_r}uu_\nu\, dS\right)^2 = H^2(r).
\end{equation}
Then the last two terms in \eqref{eq:IF'2term} can be controlled
as follows. On one hand, we obtain
\begin{align} \label{eq:RN2}
  \left|\int_{B_r}(x\cdot\nabla u)\Delta u\, dx\right| & \leq r\left(\int_{B_r}|\nabla u|^2\, dx\right)^{1/2}\left(\int_{B_r}|\Delta u|^2\, dx\right)^{1/2} \nonumber \\
  & \leq Mr\int_{B_r}|\nabla u|^2\, dx \leq 2MrH(r)
\end{align}
for every $\tilde r < r\leq r_2$, where we used \eqref{eq:Pde} and
\eqref{eq:DH}. On the other hand, we may estimate as above by using
Poincar\'e inequality \eqref{eq:Poincare} and \eqref{eq:DH}
\begin{align} \label{eq:RN3}
  \left|\int_{B_r}u\Delta u\, dx\right|& \leq \left(\int_{B_r}u^2\, dx\right)^{1/2}\left(\int_{B_r}|\Delta u|^2\, dx\right)^{1/2} \nonumber \\
  & \leq \sqrt{C_p}Mr\int_{B_r}|\nabla u|^2\, dx \leq H(r).
\end{align}
By first plugging \eqref{eq:RN1}, \eqref{eq:RN2}, and \eqref{eq:RN3}
into \eqref{eq:IF'2term}, and then by coupling \eqref{eq:IF'2term}
and \eqref{eq:DeltauA} with \eqref{eq:IF'}, we may continue estimating
\eqref{eq:IF'} using again hypothesis (A) as follows
\begin{align*}
  I^2(r)F'(r) & \geq H(r)I(r) + 2rH^2(r) + (n-2)H(r)I(r) - 4MrH(r)I(r) \\
  & \quad - (n-2)H(r)I(r) - 2MrH(r)I(r) - (n-1)H(r)I(r) \\
  & \qquad - 2rH^2(r) \\
  & \geq -(n-2)H(r)I(r) -6MrH(r)I(r).
\end{align*}
From which we get an inequality of the form \eqref{eq:F'ineq} for $\tilde r<r\leq r_2$ 
\[
F'(r) \geq -\frac{n-2}{r}F(r) - 6MF(r) \geq -\frac{\alpha}{r}F(r),
\]
where $\alpha=n-2+6Mr_2$.

\medskip

Assume now that (B) holds true. 

We estimate the third term on the right in \eqref{eq:IF'}
as follows using equation \eqref{eq:Pde} and the Cauchy
inequality with $\eps=1/(2M)>0$
\begin{align} \label{eq:deltau}
  \left|\int_{\partial B_r}u\Delta u\, dS\right| & \leq \left|\int_{\partial B_r}u(b(x)\cdot\nabla u)\, dS\right| \leq M\int_{\partial B_r}|u||\nabla u|\, dS \nonumber \\
  & \leq 2M^2\int_{\partial B_r}u^2\, dS + \frac1{2}\int_{\partial
    B_r}|\nabla u|^2\, dS.
\end{align}
Then we estimate in \eqref{eq:IF'} using first hypothesis (B) and then
\eqref{eq:deltau} as follows
\begin{align*}
  I^2(r)F'(r) & = H(r)I(r) + rI(r)\int_{\partial B_r}|\nabla u|^2\, dS + rI(r)\int_{\partial B_r}u\Delta u\, dS \\
  & \quad - (n-1)H(r)I(r) - 2rH^2(r) \\
  & \geq 2rH^2(r) + \frac1{2}rI(r)\int_{\partial B_r}|\nabla u|^2\, dS - 2M^2rI^2(r) \\
  & \quad - \frac1{2}rI(r)\int_{\partial B_r}|\nabla u|^2\, dS - (n-1)H(r)I(r) - 2rH^2(r) \\
  & \geq -2M^2rI^2(r) - (n-1)H(r)I(r).
\end{align*}
This implies an inequality of the required form for $\tilde r<r\leq r_2$
\[
F'(r) \geq -\frac{n-1}{r}\left(F(r) + \frac{2(Mr_2)^2}{n-1}\right).
\]

In conclusion, cases both (A) and (B) lead to an inequality of
the form \eqref{eq:F'ineq}.

We may proceed as in the proof of Theorem~\ref{thm:UC1}. In a similar
fashion, we obtain a Harnack type inequality as in \eqref{eq:Harnack},
i.e.,
\[
I(t) \leq
\left(\frac{t}{s}\right)^{n-1+2\sup_{r\in[s,t]}F(r)}I(s)
\]
for $[s,t]\subset (\tilde r,r_2]$, where using \eqref{eq:Fineq} we may
estimate
\begin{equation} \label{eq:Fineqprecise}
\sup_{r\in[s,t]}F(r) \leq \left(\frac{r_2}{\tilde r}\right)^\alpha
F(r_2) + \beta\left(\left(\frac{r_2}{\tilde r}\right)^\alpha-1\right).
\end{equation}
Since $I(s)\to 0$ as $s\to \tilde r$, it follows that $I(t)=0$. This
is a contradiction.
\end{proof}

We remark that by using the frequency function it is possible to
obtain a representation formula for $I(r)$. More precisely, the fact
that
\[
\frac{\tilde I'(r)}{\tilde I(r)}= \frac{2}{r}F(r),
\]
where $\tilde I(r) = r^{1-n}I(r)$, implies the following
\begin{equation} \label{eq:RepresFormulaI}
\int_{\partial B_r}u^2\, dS =
\gamma\exp\left(-2\int_r^RF(t)\frac{dt}{t}\right)r^{n-1}
\end{equation}
for $0<r<R$, where $\gamma:= I(R)$. Equation \eqref{eq:RepresFormulaI}
enables to derive a priori lower bounds for $I(r)$ provided that an
estimate of the form \eqref{eq:Fineqprecise} is available for the
frequency function $F(r)$. Note, however, that the method in the
present paper is by contradiction, and hence we are not able to apply
directly \eqref{eq:Fineqprecise}. A posteriori, it is known that an
estimate like \eqref{eq:Fineqprecise} is valid for the solutions to
\eqref{eq:Pde}, see \cite{GarLinCPAM, TaoZhang}.

\section{Nonlinear generalizations}
\label{sect:Gener}

Consider the \p-Laplace equation in $G$
\begin{equation} \label{eq:pLap} \nabla\cdot(|\nabla u|^{p-2}\nabla
  u)=0, \qquad 1<p<\infty.
\end{equation}
For $p=2$ we recover the Laplace equation $\Delta u=0$. We refer the
reader to, e.g., Heinonen et al.~\cite{HKM} and
Lindqvist~\cite{Lindqvist} for a detailed study of the \p-Laplace
equation and various properties of its solutions. The problem of
unique continuation, both (i) and (ii), is still, to the best of our
knowledge, an open problem, except for the linear case $p=2$. The
planar case for (ii) has been solved by Manfredi in \cite{Manfredi},
see also Bojarski and Iwaniec~\cite{BoIw}, as they have observed that
the complex gradient of a solution to \eqref{eq:pLap} is quasiregular.

In addition to unique continuation, a long-standing open problem is to
find a frequency function associated with solutions to
\eqref{eq:pLap}.

In \cite{GraMa} the authors of the present paper deal with the problem
of unique continuation by studying a certain generalization of
Almgren's frequency function for the \p-Laplacian. By this approach
some partial results on the unique continuation problem in both cases
(i) and (ii) were obtained. Two possible nonlinear generalizations for
the frequency function defined in \cite{GraMa} were as follows
\begin{equation} \label{FFp} F_p(r) =
  \frac{r^{p-1}\int_{B_r}|\nabla u|^p\, dx}{\int_{\partial
      B_r}|u|^p\, dS},
\end{equation}
and a slight modification of \eqref{FFp}
\begin{equation} \label{FFp2} \widetilde F_p(r) =
  \frac{r\int_{B_r}|\nabla u|^p\, dx}{\int_{\partial
      B_r}|u|^p\, dS}.
\end{equation}
As for the frequency functions defined in \eqref{FF1} and \eqref{FF2},
it is easy to check that $F_p(r)$ satisfies the following scaling
property for each $\tau \in \R$, $\tau>0$,
\[
F_p^v(r)=F_p^{u}(\tau r),
\]
where $u$ is a solution to \eqref{eq:pLap} and $v(x)=u(\tau x)$. The
scaling property for the frequency function defined in \eqref{FFp2} is
slightly different and can be stated as follows
\[
\widetilde F_p^v(r)=\tau^{p-2}\widetilde F_p^u(\tau r).
\]
The results obtained in \cite{GraMa} were the following.

\begin{theorem} \label{thm:uniquelinear} Suppose $u\in
  W\loc^{1,p}(G)\cap C^2(G)$ is a solution to the \p-Laplace
  equation in $G$. Consider an affine function
\[
L(x) = l(x) + l_0,
\]
where $l_0\in\R$ and $$l(x) = \sum_{i=1}^n\alpha_ix_i$$ is not
identically zero. Then if $u(x)=L(x)$ in $B_r\subset G$,
$u(x)=L(x)$ for every $x\in G$.
\end{theorem}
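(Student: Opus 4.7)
The plan is to reduce the statement to the classical unique continuation principle for linear uniformly elliptic equations with Lipschitz coefficients by linearizing the \p-Laplace equation about the affine solution $L$. Set $w := u - L \in C^2(G)$, so that $w$ vanishes on $B_r$, and note that $\nabla L \equiv \alpha \neq 0$ on $G$. Define
\[
E = \{x \in G : w \text{ vanishes in some neighborhood of } x\}.
\]
Then $E$ is open and nonempty (it contains $B_r$). Since $G$ is connected, it suffices to show that $E$ is closed in $G$; the desired conclusion $u \equiv L$ on $G$ then follows.

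To prove closedness, let $x_0 \in \overline{E}\cap G$. Since $u, \nabla u \in C^0(G)$ and $u = L$, $\nabla u = \alpha$ on $E$, continuity gives $u(x_0) = L(x_0)$ and $\nabla u(x_0) = \alpha$. Choose $\rho > 0$ with $\overline{B}_\rho(x_0) \subset G$ and $|\nabla u(x)| \geq |\alpha|/2$ on $\overline{B}_\rho(x_0)$. Both $u$ and $L$ satisfy $\nabla\cdot A(\nabla u)=0$ with $A(\xi)=|\xi|^{p-2}\xi$, hence
\[
A(\nabla u) - A(\nabla L) = \Bigl(\int_0^1 DA\bigl(\nabla L + t\nabla w\bigr)\,dt\Bigr)\nabla w =: M(x)\nabla w,
\]
which yields the linear divergence-form equation $\nabla\cdot(M(x)\nabla w)=0$ on $B_\rho(x_0)$. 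Because $DA(\xi)_{ij} = |\xi|^{p-2}\delta_{ij} + (p-2)|\xi|^{p-4}\xi_i\xi_j$ is uniformly elliptic off the origin and both $\nabla u$ and $\nabla L$ stay away from the origin on $\overline{B}_\rho(x_0)$, the matrix $M(x)$ is uniformly elliptic there. Moreover $M(x)$ is Lipschitz on $\overline{B}_\rho(x_0)$, since $u \in C^2$ makes $\nabla u \in C^1$.

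Since $E$ is open and $x_0 \in \overline{E}$, there is a nonempty open subset of $B_\rho(x_0)$ on which $w$ vanishes identically. By the unique continuation principle for linear uniformly elliptic divergence-form equations with Lipschitz coefficients, for instance Garofalo--Lin's main result for \eqref{eq:GarLin} in \cite{GarLinCPAM}, we conclude $w \equiv 0$ on $B_\rho(x_0)$. Hence $x_0 \in E$, showing that $E$ is closed in $G$ and forcing $E = G$.

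The main obstacle is ensuring that the linearized equation genuinely falls within the scope of the classical linear unique continuation result, and the hypothesis $\alpha \neq 0$ is essential for this. It keeps $\nabla u$ bounded away from the degenerate set $\{\nabla u = 0\}$ of the \p-Laplacian, so that $DA(\nabla L + t\nabla w)$ remains uniformly elliptic along the linearization segment and $M(x)$ inherits the regularity of $\nabla u$. Without such a non-degenerate reference solution this reduction to a linear problem is unavailable, which is precisely why the general nonlinear unique continuation problem for the \p-Laplace equation remains open.
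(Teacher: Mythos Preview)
Your argument is correct. The paper itself does not prove this theorem---it is quoted from the companion work \cite{GraMa}---but the Remark immediately following the statement indicates the intended route: $w=u-L$ is shown to satisfy a uniformly elliptic divergence-form equation with \emph{constant} principal part (essentially $DA(\alpha)$), and a linear change of coordinates then brings it to the canonical form \eqref{eq:Pde}, so that the frequency-function unique continuation Theorem~\ref{thm:UC2} of the present paper applies. You perform essentially the same linearization about $L$, but retain the full variable-coefficient matrix $M(x)=\int_0^1 DA(\alpha+t\nabla w)\,dt$ and invoke the general Garofalo--Lin theorem for \eqref{eq:GarLin} directly. This is shorter and perfectly valid, yet it cites as a black box precisely the result this survey sets out to explain by hand; the paper's reduction to \eqref{eq:Pde} keeps the argument self-contained within the elementary frequency-function framework of \textsection\ref{sect:Pde}, and is in fact the stated motivation for studying \eqref{eq:Pde} at all. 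One small technical point: for $M(x)$ to be uniformly elliptic you need the entire segment $\{\alpha+t\nabla w(x):t\in[0,1]\}$, not just its endpoints, to avoid the origin; this is secured once $\rho$ is chosen so that $|\nabla w|<|\alpha|/2$ on $\overline{B}_\rho(x_0)$, which is available since $\nabla w(x_0)=0$.
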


\begin{remark}
  It can be shown that the difference $u-L$ satisfies a uniformly
  elliptic equation in divergence form with constant principal part
  coefficients, see equation (3.2) in \cite{GraMa}. It is standard,
  see e.g. \cite[Theorem 8.1, pp. 145--146]{ZachThoe}, that there
  exists a linear transformation of coordinates of the form
\[
\xi_i = \sum_{j=1}^n c_{ij}x_j, \quad i=1,\ldots,n,
\]
with nonsingular matrix $[c_{ij}]$, in such a way that equation (3.2)
in \cite{GraMa} can be reduced, in terms of the new coordinates
$\xi_1,\xi_2,\ldots,\xi_n$, to the canonical form
\eqref{eq:Pde}. Hence, in regard to nonlinear generalizations, it is
of interest to study the unique continuation principle for the
solutions to \eqref{eq:Pde}.
\end{remark}

The preceding theorem could be also stated as follows. Suppose
$u,\,v\in W\loc^{1,p}(G)\cap C^2(G)$ are two solutions to the
\p-Laplace equation in $G$. Assume further that $\nabla v\neq 0$ in
$G$. Then if $u(x)=v(x)$ in $B_r\subset G$, $u(x)=v(x)$ for every
$x\in G$.

\begin{theorem} \label{thm:weakDprop} Suppose $u\in C^1(G)$. Assume
  further that there exist two concentric balls $B_{r_b}\subset
  \overline{B}_{R_b}\subset G$ such that the frequency function
  $F_p(r)$ is defined, i.e., $I(r)>0$ for every $r\in(r_b,R_b]$, and
  moreover, $\|F_p\|_{L^\infty((r_b,R_b])}<\infty$. Then there exists
  some $r^\star\in(r_b,R_b]$ such that
\begin{equation} \label{eq:weakD1}
\int_{\partial B_{r_1}}|u|^p\, dS
  \leq 4\int_{\partial B_{r_2}}|u|^p\,
  dS,
\end{equation}
for every $r_1,\,r_2\in(r_b,r^\star]$. In particular, the following
weak doubling property is valid
\begin{equation} \label{eq:weakD2}
\int_{\partial B_{r^\star}}|u|^p\, dS
  \leq 4\int_{\partial B_r}|u|^p\,
  dS,
\end{equation}
for every $r\in(r_b,r^\star]$.
\end{theorem}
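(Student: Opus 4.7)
The plan is to reduce \eqref{eq:weakD1} to the single claim that $I(r) := \int_{\partial B_r}|u|^p\, dS$ does not degenerate at the left endpoint of the interval on which the frequency function is defined, i.e.\ that $I(r_b) > 0$. Since $u \in C^1(G)$, $I$ is continuous on $[r_b, R_b]$; if $I(r_b) > 0$, then continuity alone yields $I(r) \in [I(r_b)/2,\, 2I(r_b)]$ throughout some interval $[r_b, r^\star]$, which in turn gives $I(r_1)/I(r_2) \leq 4$ for every $r_1,r_2 \in (r_b, r^\star]$ and hence \eqref{eq:weakD1}. The second statement \eqref{eq:weakD2} then follows by specializing to $r_1 = r^\star$.

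The technical heart of the argument is therefore to establish $I(r_b) > 0$, and this is where the assumption $F_p \leq \Lambda := \|F_p\|_{L^\infty((r_b,R_b])}$ enters. I argue by contradiction: suppose $I(r_b) = 0$. Then continuity and non-negativity of $|u|^p$ force $u \equiv 0$ on $\partial B_{r_b}$. Writing $B_{r_b} = B(x_0, r_b)$ and parametrizing nearby points as $x_0 + \rho\omega$ with $\omega \in S^{n-1}$, the fundamental theorem of calculus along the radial direction combined with Hölder's inequality with exponents $p/(p-1)$ and $p$ yields
\[
|u(x_0 + r\omega)|^p \leq (r - r_b)^{p-1}\int_{r_b}^r |\nabla u(x_0 + \rho\omega)|^p\, d\rho
\]
for every $r \in (r_b, R_b]$. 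Integrating over $\omega \in S^{n-1}$, applying Fubini to switch the radial and angular integrations, and using $\rho^{1-n} \leq r_b^{1-n}$ on $[r_b, r]$, I arrive at
\[
I(r) \leq \left(\frac{r}{r_b}\right)^{n-1}(r - r_b)^{p-1}\bigl(D_p(r) - D_p(r_b)\bigr) \leq \left(\frac{r}{r_b}\right)^{n-1}(r - r_b)^{p-1}D_p(r).
\]

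Invoking $F_p \leq \Lambda$, i.e.\ $D_p(r) \leq \Lambda I(r)/r^{p-1}$, the previous display rearranges into the absorption inequality
\[
I(r)\left[1 - \Lambda\left(\frac{r}{r_b}\right)^{n-1}\left(\frac{r - r_b}{r}\right)^{p-1}\right] \leq 0.
\]
Since $((r-r_b)/r)^{p-1} \to 0$ as $r \to r_b^+$, the bracket tends to $1$ and is therefore strictly positive for all $r > r_b$ sufficiently close to $r_b$; this forces $I(r) \leq 0$, contradicting the hypothesis $I(r) > 0$ on $(r_b, R_b]$. Hence $I(r_b) > 0$, as required.

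I expect the main obstacle to be this contradiction step: one must chain the fundamental theorem of calculus, Hölder's inequality, Fubini, and the bound $D_p \leq \Lambda I/r^{p-1}$ in exactly the right order to turn the pointwise vanishing $u|_{\partial B_{r_b}} \equiv 0$ into an absorption of the shape $I(r) \leq \varepsilon(r)\, I(r)$ with $\varepsilon(r) \to 0$ as $r \to r_b^+$. Once $I(r_b) > 0$ is in hand, both \eqref{eq:weakD1} and its specialization \eqref{eq:weakD2} are immediate from the continuity of $I$ on the closed interval $[r_b, R_b]$.
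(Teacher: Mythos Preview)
The paper does not actually prove this theorem; it is stated in \textsection\ref{sect:Gener} as a result quoted from the companion paper \cite{GraMa}, so there is no in-paper proof to compare against directly.

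That said, your argument is correct and self-contained. The reduction to $I(r_b)>0$ is the right move, and the contradiction step is sound: the radial fundamental-theorem-of-calculus plus H\"older bound gives $I(r)\le (r/r_b)^{n-1}(r-r_b)^{p-1}D_p(r)$ once $u$ vanishes on $\partial B_{r_b}$, and combining this with the frequency bound $D_p(r)\le \Lambda I(r)/r^{p-1}$ yields the absorption $I(r)\le \varepsilon(r)I(r)$ with $\varepsilon(r)\to 0$ as $r\to r_b^+$, contradicting $I(r)>0$. Once $I(r_b)>0$ is established, continuity of $I$ on $[r_b,R_b]$ (immediate from $u\in C^1(\overline{B}_{R_b})$) gives \eqref{eq:weakD1} and hence \eqref{eq:weakD2} exactly as you describe.

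It is worth noting how your route differs in spirit from the techniques the paper develops in \textsection\ref{sect:Laplace}--\ref{sect:Pde}. There, for $p=2$, the PDE is used through Gauss--Green to obtain the identity $(\log I(r))'=(n-1)/r+2F(r)/r$, which integrates to a Harnack-type inequality for $I$ with explicit exponent governed by $\sup F$. For general $u\in C^1(G)$ (no equation assumed) and general $p$, that identity is unavailable, so your more elementary continuity-plus-absorption strategy is the natural substitute. The price is that your $r^\star$ is nonexplicit (coming from continuity of $I$ at $r_b$), whereas the Harnack approach, when applicable, gives quantitative control over the whole interval; but this matches the qualitative form in which the theorem is stated.
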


In the following we formulate a partial result on the unique
continuation problem for the \p-Laplace equation. It says that the
local boundedness of the frequency function implies the unique
continuation principle. In this respect the situation is similar to
the linear case $p=2$, and we thus generalize this phenomenon to every
$1<p<\infty$.

\begin{theorem} \label{thm:uniquecont} Suppose $u$ is a solution to
  the \p-Laplace equation in $G$. Consider arbitrary concentric balls
  $B_{r_b}\subset \overline{B}_{R_b}\subset G$. Assume the following:
  whenever $I(r)>0$ for every $r\in (r_b,R_b]$, then
  $\|F_p\|_{L^\infty((r_b,R_b])}<\infty$. Then the following unique
  continuation principle follows: If $u$ vanishes on some open ball in
  $G$, then $u$ is identically zero in $G$.
\end{theorem}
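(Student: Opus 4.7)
The plan is to argue by contradiction and reduce the statement to a direct application of the weak doubling property in Theorem~\ref{thm:weakDprop}, which already encodes all the analytic work done in \cite{GraMa}. Assume $u$ vanishes on some open ball of $G$ but is not identically zero on $G$. By the chaining argument already employed in the proof of Theorem~\ref{thm:UC2} --- connect a point of $D$ to a point where $u$ does not vanish by a rectifiable curve in $G$ and extract a finite subcover of small concentric balls --- one produces a center $x_0\in G$ and radii $0<r_1<R_b$ with $\overline{B}_{R_b}(x_0)\subset G$ such that $u\equiv 0$ on $B_{r_1}(x_0)$ while $u\not\equiv 0$ on $B_{R_b}(x_0)$. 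Continuity of $u$ then allows a harmless (small) decrease of $R_b$, if necessary, so that in addition $I(R_b):=\int_{\partial B_{R_b}(x_0)}|u|^p\,dS>0$.

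Next I would isolate the critical radius. Set
\[
\tilde r := \sup\{\,r\in[r_1,R_b]:I(r)=0\,\}.
\]
Because $I$ is continuous (a consequence of $u\in C(G)$) and $I(R_b)>0$, we have $\tilde r<R_b$ and $I(\tilde r)=0$. The definition of $\tilde r$ combined with the pointwise inequality $I\ge 0$ forces $I(r)>0$ for every $r\in(\tilde r,R_b]$. The standing hypothesis of the theorem, applied to the pair of concentric balls $B_{\tilde r}(x_0)\subset\overline{B}_{R_b}(x_0)\subset G$, therefore yields
\[
\|F_p\|_{L^\infty((\tilde r,R_b])}<\infty.
\]

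With this bound in hand, Theorem~\ref{thm:weakDprop} (applied with $r_b:=\tilde r$) produces a radius $r^\star\in(\tilde r,R_b]$ such that
\[
\int_{\partial B_{r^\star}(x_0)}|u|^p\,dS \;\leq\; 4\int_{\partial B_r(x_0)}|u|^p\,dS
\]
for every $r\in(\tilde r,r^\star]$. Letting $r\to \tilde r^+$ and using continuity of $I$ gives $I(r^\star)\le 4\,I(\tilde r)=0$, which contradicts $I(r^\star)>0$.

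The core analytic input --- boundedness of $F_p$ together with the weak doubling inequality --- is supplied by the hypothesis and by Theorem~\ref{thm:weakDprop}, so no new estimate is needed. The only step that requires a bit of care is the definition of $\tilde r$ and the verification that $I>0$ on all of $(\tilde r,R_b]$ (so that $F_p$ is in fact defined there and the hypothesis may be invoked); this is the sole place where continuity of $I$ and the nonnegativity of $|u|^p$ are essential, and it is the main, though very mild, obstacle in the argument. Apart from this bookkeeping, the scheme is structurally identical to the contradiction arguments used in Theorems~\ref{thm:UC1} and \ref{thm:UC2}.
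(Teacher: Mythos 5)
Your proof is correct as written. Note that the paper itself contains no proof of Theorem~\ref{thm:uniquecont}: it is quoted as a result of \cite{GraMa}, so there is no in-text argument to compare against; what you have produced is a clean deduction of it from Theorem~\ref{thm:weakDprop}, following the same contradiction scheme the paper uses for Theorems~\ref{thm:UC1} and \ref{thm:UC2}, but with the Harnack-type inequality for $I$ (which in the linear cases is derived from monotonicity/boundedness of the frequency) replaced by a black-box appeal to the weak doubling inequality \eqref{eq:weakD2}. The one genuinely delicate point, which you handle correctly, is the choice of the critical radius: since for the \p-Laplacian one does not know that $r\mapsto I(r)$ is non-decreasing, the definition $\tilde r=\sup\{r\in[r_1,R_b]:I(r)=0\}$ is the right substitute; continuity of $I$ (from continuity of $u$) and closedness of the zero set give $I(\tilde r)=0$ while $I>0$ on $(\tilde r,R_b]$, so the theorem's hypothesis applies to the pair $(\tilde r,R_b)$ and letting $r\downarrow\tilde r$ in \eqref{eq:weakD2} forces $I(r^\star)=0$, the desired contradiction. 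Two minor points you could make explicit: Theorem~\ref{thm:weakDprop} assumes $u\in C^1(G)$, which for solutions of the \p-Laplace equation is supplied by local $C^{1,\alpha}$ regularity; and in adjusting $R_b$ so that $I(R_b)>0$, observe that $u$ vanishes on $\overline{B}_{r_1}(x_0)$ by continuity, so any point of $B_{R_b}(x_0)$ where $u\neq0$ lies at distance strictly between $r_1$ and $R_b$ from $x_0$, and that distance may be taken as the new $R_b$.
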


It remains an open problem whether the frequency function $F_p(r)$ is
locally bounded for the solutions to the \p-Laplace equation. Local
boundedness combined with the method of the present paper would solve
the unique continuation problem for equation \eqref{eq:pLap}.

\begin{remark}
  The corresponding divergence identity \eqref{eq:RNdiv} for solutions
  to the \p-Laplace equation is available, as well as the
  corresponding Rellich--Necas type formula, see e.g. \cite[Noether's
  theorem]{Evans} and \cite[Lemma 4.1]{HardtLin}, respectively.
\end{remark}

\end{document}